\numberwithin{equation}{section}
\numberwithin{figure}{section}
\theoremstyle{plain}
\newtheorem{thm}{\protect\theoremname}
  \theoremstyle{plain}
  \newtheorem{prop}[thm]{\protect\propositionname}
  \theoremstyle{plain}
  \newtheorem{lem}[thm]{\protect\lemmaname}
  \theoremstyle{definition}
  \newtheorem*{example*}{\protect\examplename}
  \providecommand{\examplename}{Example}
  \providecommand{\lemmaname}{Lemma}
  \providecommand{\propositionname}{Proposition}
\providecommand{\theoremname}{Theorem}
\begin{document}

\title{Tail Behavoir of sums of random components}

\author{Yu Li}

\address{Faculté des Sciences, de la Technologie et de la Communication 6, }

\address{rue Richard Coudenhove-Kalergi L-1359 }

\address{Luxembourg}
\begin{abstract}
A class of stochastic processes strongly related to random sums plays
an important role in network and in finance. In this paper we study
this kind of stochastic process discuss an overtime unchanged parameter
and reveal its asymptotic behavior. 
\end{abstract}
\maketitle

\section{\label{sec:Introduction}Introduction}

A class of discrete stochastic processes $\left\{ X_{n}\right\} $
of the form 
\begin{equation}
X_{n+1}=\sum_{i=0}^{X_{n}}\xi_{i}^{(n+1)}\label{eq:definition}
\end{equation}
plays an important role in modeling of network \citep*{delay_of_interconnected_flows}
and in modeling of finance\citep{modeling_of_extremal_events}. In
this paper we study this kind of stochastic process, introduce an
overtime unchanged parameter to characterize it and reveal its asymptotic
behavior. As we see in (\ref{eq:definition}), the interesting thing
is that here $X_{n}$ is the counting process for its successor $X_{n+1}$
for each $n$.

Our main motivation comes from insurance. We recall the classical
model of insurance risk \citep{modeling_of_extremal_events}. Let
$u$ stand for initial capital, $c$ for loaded premium rate and $N(t)$
stands for the number of claims until time $t$. The total claim amount
$S_{t}$ consists of a random sum of independent and identically distributed
claims $X_{i}$

\[
U(t)=u+ct-S_{t},S_{t}=\sum_{i=1}^{N(t)}X_{i},t\geq0
\]
. It is common to simplify this model further by assuming $N(t)$
is a homogeneous Poisson process independent of $X_{i}$

As a motivation, we recall a model of network traffic proposed in
\citep{delay_of_interconnected_flows}. Let us consider a chain of
routers through which we send two packets. Here we study inter delay
time $\tau_{n}$, time interval between observed packets. We fix a
basic time interval, assuming to be $1$ which corresponds to a single
service time on each router. If the inter delay time is $k$, following
our assumption, there are $k-1$ packets between the two observed
packets. The inter delay time can be changed in passing the routers
influenced by lateral traffic. Here let random variables $\xi_{i}^{(n)}$
represent the lateral traffic and it can be interpreted as ``the
sum of the packets entering this chain'' while the $i$-th packet
is being served. After the two observed packets pass $n+1$-th router,
the inter delay time is of the form
\[
\tau_{n+1}=\sum_{i=1}^{\tau_{n}}\xi_{i}^{(n)}
\]
Consider, if no packets enters and leaves the chain on the $n$-th
router, all $\xi_{i}^{n}$ take value 1 then $\tau_{n+1}=\tau_{n}$.
If only one packet leave the chain, then $\tau_{n+1}=\tau_{n}-1$.
So if we set $\xi_{i}^{n}\geq0$ , then both ``entering'' and ``leaving''
cases are included.

A major goal is to show the limit distribution. Since it is not possible,
in general, to find the explicit form of the limit distribution, we
will one of most interesting features is the behavoir of the tails.

In this paper we study a class of stochastic processes related to
random sum. We will see that our work describing the behavoir of inter
delay time, is a generalization of that of U. Sorger and Z. Suchanecki\citep{delay_of_interconnected_flows}
which has heavy tails. 

First, we introduce a parameter, which is associated with tail property
of a distribution, to characterize this stochastic process. We prove
that this parameter is convolution invariant, unchanged over time
and it depends only on components random variables. As an extra Bonus
we found that for convoluted distribution $F$ the term $\overline{F}(x)e^{xs}$
can never converge to a positive constant, it can only be divergent
or converge to $0$. 

Second, we study the asymptotic property of such stochastic process
$\left\{ X_{n}\right\} $ and present a way to calculate its limiting
distribution.

As mean and variance depend on $n$, $\left\{ X_{n}\right\} $ is
not a stationary stochastic process. However, there is a parameter
which can be regarded as a numerical characteristic of this stochastic
process. Such a parameter will be handled in section \ref{sec:A-convolution-invariant}.
We will see that such a parameter can deal with not only discrete
case, but also continuous case.

\section{\label{sec:Basic-properties-of}Basic properties of this class of
stochastic process}

First we give a precise definition of this stochastic process $\left\{ X_{n}\right\} $.
We consider a class of stochastic process $X_{n}$ equipped with a
given stochastic process $\xi_{i}^{(n)}$ of the form
\begin{equation}
X_{n+1}=\sum_{i=0}^{X_{n}}\xi_{i}^{(n+1)}\label{eq:insider_random_sum}
\end{equation}
such that
\begin{enumerate}
\item $X_{0}$ is a positive integer constant
\item $X_{n}$ takes values in set of positive integers
\item $\xi_{i}^{(n)}$ is a given independent and identically distributed
stochastic process
\item $X_{n}$ and $\xi_{i}^{\left(k\right)}$ are independent for all $n,k,i$.
\end{enumerate}
(\ref{eq:insider_random_sum}) implies a compound distribution \citep{Adelson},
so we have so following properties
\begin{prop}
For this stochastic process $\left\{ X_{n}\right\} $ we have\end{prop}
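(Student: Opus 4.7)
The stated proposition is not spelled out in the excerpt, but the framing — ``(\ref{eq:insider_random_sum}) implies a compound distribution, so we have the following properties'' — together with the Adelson citation makes it clear that the intended content is the standard list of elementary quantities associated with a compound sum: the probability generating function (PGF) of $X_{n+1}$ in terms of that of $X_n$ and of $\xi$, and the resulting recursions for $E[X_n]$ and $\mathrm{Var}[X_n]$. My plan is to derive all three by conditioning on $X_n$ and then iterating from the deterministic base case $X_0$.

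First I would condition on $X_n$. Because $X_n$ and $\{\xi_i^{(n+1)}\}$ are independent by hypothesis~(4), and the $\xi_i^{(n+1)}$ are i.i.d.\ by hypothesis~(3), the conditional distribution of $X_{n+1}$ given $X_n=m$ is that of $\xi_0^{(n+1)}+\cdots+\xi_m^{(n+1)}$, a sum of $m+1$ i.i.d.\ copies of $\xi$. Writing $\varphi(s)=E[s^{\xi}]$, this gives $E[s^{X_{n+1}}\mid X_n]=\varphi(s)^{X_n+1}$, hence by the tower property
\begin{equation*}
G_{X_{n+1}}(s)\;=\;\varphi(s)\,G_{X_n}(\varphi(s)),
\end{equation*}
which is the compound-distribution PGF identity. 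Iterating this identity with the base case $G_{X_0}(s)=s^{X_0}$ (since $X_0$ is a positive integer constant) yields a closed functional-iteration formula for $G_{X_n}$.

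Second, for the moments I would apply Wald's identity and the conditional variance formula. Taking the derivative of the PGF relation at $s=1$, or equivalently computing $E[X_{n+1}\mid X_n]=(X_n+1)E[\xi]$ and taking expectations, produces the first-order linear recursion $E[X_{n+1}]=(E[X_n]+1)E[\xi]$, which solves explicitly by the geometric-series trick. For the variance, the law of total variance gives
\begin{equation*}
\mathrm{Var}[X_{n+1}]\;=\;E\bigl[(X_n+1)\mathrm{Var}[\xi]\bigr]\;+\;\mathrm{Var}\bigl[(X_n+1)E[\xi]\bigr]\;=\;(E[X_n]+1)\mathrm{Var}[\xi]\;+\;(E[\xi])^{2}\,\mathrm{Var}[X_n],
\end{equation*}
another first-order linear recursion, whose solution is obtained by induction once the explicit expression for $E[X_n]$ is substituted in.

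The only step requiring any real care is keeping the $+1$ from the summation index $i=0,\ldots,X_n$ consistent through the recursion — this is what distinguishes the present process from the usual Galton--Watson branching process and shifts both the mean and variance formulas by explicit geometric-type correction terms. Aside from that bookkeeping, no step is delicate: conditional independence (hypothesis~4) makes every expectation separate cleanly, and the base case is trivial because $X_0$ is deterministic, so induction on $n$ finishes the proof.
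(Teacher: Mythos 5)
Your reconstruction of the proposition's content is essentially right, and your method --- conditioning on $X_n$, the PGF identity $G_{X_{n+1}}(s)=\varphi(s)\,G_{X_n}(\varphi(s))$ (or Wald plus the law of total variance) --- is the standard derivation. The paper itself supplies no argument at all, merely citing Adelson, so your proof fills a genuine hole; in that sense the comparison is one-sided and your route is fine. One point of substance, though: the formulas you obtain do not coincide with the ones the paper actually asserts. The proposition as printed reads $E[X_{n+1}]=E[X_n]\,E[\xi]$, $\sigma^{2}[X_{n+1}]=E[X_n]\,\sigma^{2}[\xi]+\sigma^{2}[X_n]\,E[\xi]^{2}$, and $F_{n+1}(y)=\sum_{k\ge1}P[X_n=k]\,F_\xi^{*k}(y)$ --- i.e.\ the classical compound-sum identities for a sum of exactly $X_n$ i.i.d.\ terms. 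Your versions carry the extra $+1$ coming from the lower summation limit $i=0$ in the displayed definition $X_{n+1}=\sum_{i=0}^{X_n}\xi_i^{(n+1)}$, giving $E[X_{n+1}]=(E[X_n]+1)E[\xi]$ and $F_{n+1}(y)=\sum_{k}P[X_n=k]\,F_\xi^{*(k+1)}(y)$. So you have correctly proved the proposition for the process as literally defined, but not the proposition as stated; the two are reconciled only if the definition is read with lower limit $i=1$, which is evidently what the paper (and its later use of item~3 in Section~4) intends. You should either flag the $i=0$ versus $i=1$ discrepancy explicitly and prove the stated formulas under the $i=1$ convention, or state clearly that the printed proposition requires the correction terms you derive; as written, your last paragraph presents the $+1$ as a feature of the model rather than as a contradiction with the result you were asked to establish.
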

\begin{enumerate}
\item $E\left[X_{n+1}\right]=E\left[X_{n}\right]E\left[\xi\right]$
\item $\sigma^{2}\left[X_{n+1}\right]=E\left[X_{n}\right]\sigma^{2}\left[\xi\right]+\sigma^{2}\left[X_{n}\right]E\left[\xi\right]^{2}$
\item $F_{n+1}\left(y\right)=\sum_{k=1}^{+\infty}P\left[X_{n}=k\right]F_{\xi}^{*k}\left(y\right)$
\end{enumerate}
where $F_{n}$ denotes the distribution of $X_{n}$ and $F_{\xi}$
the distribution of $\xi.$
\begin{proof}
See \citep{Adelson}.
\end{proof}
Both parameters, mean and variance, depend on $n$, so $\left\{ X_{n}\right\} $
is not stationary. However, there is a parameter unchanged over time,
which will be studied in the next section.

\section{\label{sec:A-convolution-invariant}A convolution invariant Parameter}

As we mention in section\ref{sec:Basic-properties-of} such stochastic
process $\left\{ X_{n}\right\} $ is not stationary. However, we can
characterize this stochastic process with a parameter. In this section
we study such a parameter. We define a parameter by
\begin{equation}
C\left(F\right):=\sup\left\{ t\in\mathbb{R}_{\geq0}:\lim_{x\rightarrow+\infty}\overline{F}(x)e^{xt}=0\right\} \label{eq:convolutioninvariant}
\end{equation}
where $F$ denotes a given distribution of a random variable and tail
$\overline{F}=1-F$. First, we prove this parameter is convolution
invariant and second, we prove this parameter characterize this stochastic
process. 
\begin{thm}
The parameter $C$ is the limit of hazard rate function, i.e.
\[
C(F)=\lim_{x\rightarrow\infty}\hat{m}(x)
\]
\[
\]
It is not difficult to see \citep{tailpro} 
\begin{equation}
\overline{F^{*2}}\left(x\right)=\overline{F}(x)+\int_{0}^{x}\overline{F}(x-y)dF(y)\label{eq:tailconvolution}
\end{equation}
What's more, we have a more general form
\begin{equation}
\overline{F^{*\left(k+1\right)}}\left(x\right)=\overline{F^{\left(k\right)}}(x)+\int_{0}^{x}\overline{F^{\left(k-1\right)}}(x-y)dF(y),\forall k\in\mathbb{N}\label{eq:tailconvolution2}
\end{equation}

\end{thm}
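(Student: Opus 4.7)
The plan is to interpret $\hat{m}(x)$ as the averaged cumulative hazard $\hat{m}(x) = -\frac{\log \overline{F}(x)}{x}$, which is the natural quantity making the identity $\overline{F}(x) = \exp(-x\,\hat{m}(x))$ valid by construction. With this identification the expression appearing in the definition of $C(F)$ collapses to
\begin{equation*}
\overline{F}(x)\, e^{xt} \;=\; \exp\!\bigl(x(t-\hat{m}(x))\bigr),
\end{equation*}
so the whole question reduces to the sign and size of $t-\hat{m}(x)$ for large $x$.

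Set $L := \lim_{x\to\infty}\hat{m}(x)$. I would then prove the two matching inequalities $C(F)\geq L$ and $C(F)\leq L$ separately. For the lower bound, take any $t<L$ and fix $\epsilon\in(0,L-t)$; convergence $\hat{m}(x)\to L$ gives $\hat{m}(x)\geq t+\epsilon$ eventually, so $x(t-\hat{m}(x))\leq -\epsilon x\to -\infty$ and $\overline{F}(x)e^{xt}\to 0$. Hence every $t<L$ lies in the set defining $C(F)$, and letting $t\uparrow L$ in the supremum yields $C(F)\geq L$. For the upper bound, fix any $t>L$; then eventually $\hat{m}(x)<t$, so $x(t-\hat{m}(x))>0$ and $\overline{F}(x)e^{xt}\geq 1$ for all large $x$. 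The limit therefore cannot be $0$, excluding every such $t$ from the set in (\ref{eq:convolutioninvariant}), whence $C(F)\leq L$.

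The main obstacle is delicacy around the implicit hypothesis that $\lim\hat{m}(x)$ actually exists: the argument above in fact identifies $C(F)=\liminf_{x\to\infty}\hat{m}(x)$ in general, so the theorem is rigorously correct either under an added assumption of existence of the limit or after replacing $\lim$ by $\liminf$ in the conclusion. A secondary, harmless nuisance is the borderline case $t=L$: whether it belongs to the set in (\ref{eq:convolutioninvariant}) depends on subexponential correction factors in $\overline{F}$, but since the supremum is unchanged by the inclusion or omission of a single boundary point this does not affect the value of $C(F)$. The convolution identities (\ref{eq:tailconvolution}) and (\ref{eq:tailconvolution2}) listed after the statement are not needed for this theorem itself and will presumably feed into the subsequent convolution-invariance argument for $C$.
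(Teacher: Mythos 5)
The paper supplies no proof of this theorem at all: the theorem environment is followed immediately by the lemma feeding into the convolution-invariance result, and the function $\hat{m}$ is never defined anywhere in the text. Your argument is therefore doing work the paper does not do, and it is essentially the right work. Under your reading $\hat{m}(x)=-\log\overline{F}(x)/x$ the identity $\overline{F}(x)e^{xt}=\exp\bigl(x(t-\hat{m}(x))\bigr)$ makes both inequalities $C(F)\geq L$ and $C(F)\leq L$ immediate, and your two caveats are genuine: the correct unconditional statement is $C(F)=\liminf_{x\to\infty}\bigl(-\log\overline{F}(x)/x\bigr)$ (this is just the standard characterization of the abscissa of convergence of the tail, i.e.\ the Lundberg-type exponent), and the boundary point $t=L$ is irrelevant to the supremum. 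The one point you should make explicit is that ``hazard rate function'' ordinarily means $\hat{m}(x)=f(x)/\overline{F}(x)$, in which case your quantity is its Ces\`aro average, $-\log\overline{F}(x)/x=\frac{1}{x}\int_{0}^{x}\frac{f(u)}{\overline{F}(u)}\,du$, and one extra step is needed: if $f(x)/\overline{F}(x)\to L$ then the average also tends to $L$, so the theorem as stated follows, but the converse fails and the hypothesis that the hazard rate itself converges is then genuinely load-bearing, not merely a technical nicety. You are also right that the two convolution identities bundled into the theorem statement are logically unrelated to the claim about $\hat{m}$ and belong with the later invariance argument.
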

Before we prove that the parameter $C$ in (\ref{eq:convolutioninvariant})
depending on a distribution $F$ is convolution invariant, we need
the following lemma.
\begin{lem}
\label{lem:lemma1}If $\lim_{x\rightarrow+\infty}\overline{F}(x)e^{xs}=0$,
then there exists positive $s^{*}$ such that 
\[
\overline{F}(x)\leq e^{-xs^{*}}
\]
as long as $x>X$ for some $X$.\end{lem}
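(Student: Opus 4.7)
The plan is to read off the conclusion directly from the $\varepsilon$--definition of the limit. The hypothesis $\lim_{x\to+\infty}\overline{F}(x)e^{xs}=0$ means that for every $\varepsilon>0$ there is some $X$ such that $\overline{F}(x)e^{xs}<\varepsilon$ whenever $x>X$. Specialising to $\varepsilon=1$ and rearranging yields $\overline{F}(x)<e^{-xs}$ for all $x>X$, so the natural candidate for the promised exponent is $s^{*}=s$ itself.

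The only point requiring care is the positivity clause in the conclusion. The lemma asks for $s^{*}>0$, which implicitly restricts attention to $s>0$ in the hypothesis; otherwise no positive exponential bound on $\overline{F}$ can be extracted, as any polynomially decaying tail shows. The positivity of $s$ is in fact the regime of interest, since in the definition (\ref{eq:convolutioninvariant}) the supremum is taken over $t\in\mathbb{R}_{\geq0}$ and the values contributing nontrivially are $t>0$. Granted $s>0$, the choice $s^{*}=s$ works as above; one may even allow any $s^{*}\in(0,s]$, since $e^{xs^{*}}\leq e^{xs}$ on $[0,\infty)$ gives $\overline{F}(x)e^{xs^{*}}\to 0$ and the same $\varepsilon=1$ step applies.

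There is no substantive obstacle: the statement is essentially a repackaging of the definition of convergence. Its role in the paper is presumably to convert the qualitative hypothesis ``$\overline{F}(x)e^{xs}\to 0$'' into a tractable pointwise exponential upper bound, which can then be fed into integral manipulations built on (\ref{eq:tailconvolution}) and (\ref{eq:tailconvolution2}) to establish that $C(F)$ is convolution invariant. Accordingly, I would keep the proof extremely short: a single line invoking the definition of the limit with $\varepsilon=1$, followed by a one-sentence remark pinning down $s^{*}=s$ together with the tacit assumption $s>0$.
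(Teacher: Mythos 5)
Your proof is correct, and it is in fact more careful than the paper's own argument, which takes a genuinely different route. You set $\varepsilon=1$ and $s^{*}=s$ (rightly flagging the tacit hypothesis $s>0$), which is exactly what the definition of the limit delivers. The paper instead picks $\epsilon<1$ and claims the strictly larger exponent $s^{*}=s+\frac{\ln(1/\epsilon)}{X}$ via the chain $\overline{F}(x)\leq\epsilon e^{-xs}\leq\epsilon^{x/X}e^{-xs}=e^{-xs^{*}}$; but for $x>X$ and $\epsilon<1$ one has $x/X>1$, hence $\epsilon^{x/X}<\epsilon$, so the middle inequality runs the wrong way. Indeed no strengthening to $s^{*}>s$ is possible: the tail $\overline{F}(x)=x^{-1}e^{-xs}$ satisfies $\overline{F}(x)e^{xs}\to0$ yet violates $\overline{F}(x)\leq e^{-xs^{*}}$ for every $s^{*}>s$, so your choice $s^{*}=s$ (or any $s^{*}\in(0,s]$) is essentially optimal. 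What the paper's (flawed) version was meant to buy is the strict gap $s^{*}-s>0$ used later in Proposition \ref{pro1}, where the factor $x\,e^{-(s^{*}-s)x}\to0$ is invoked; with the honest $s^{*}=s$ that step no longer works as written, so be aware that your correct lemma cannot be substituted into the subsequent estimate without reorganising that argument.
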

\begin{proof}
From the definition of convergence we have $\forall\epsilon>0,\exists X>0,\forall x>X$
such that
\[
0<\overline{F}(x)e^{xs}<\epsilon
\]
We choose $\epsilon<1$ and $s*$ fixed by 
\[
s^{*}=s+\frac{\ln\frac{1}{\epsilon}}{X}
\]
then
\[
\overline{F}(x)\leq\epsilon e^{-xs}\leq\epsilon^{\frac{x}{X}}e^{-xs}=e^{-x\left(s+\frac{\ln\left(1/\epsilon\right)}{X}\right)}\leq e^{-xs^{*}}
\]
The lemma is proved.\end{proof}
\begin{prop}
\label{pro1}If $\overline{F}(x)e^{xs}$ converges to $0$ for some
$s>0$, then $\overline{F^{*2}}(x)e^{xs}$ converges to $0$.\end{prop}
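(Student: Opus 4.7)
The plan is to start from the tail-convolution identity $\overline{F^{*2}}(x)=\overline{F}(x)+\int_{0}^{x}\overline{F}(x-y)\,dF(y)$ recorded in the theorem, multiply through by $e^{xs}$, and show that each of the two summands tends to $0$. The first summand is exactly the hypothesis, so the whole job reduces to controlling the convolution integral.

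The first step is to apply Lemma~\ref{lem:lemma1} to upgrade the qualitative hypothesis $\overline{F}(x)e^{xs}\to 0$ into a quantitative exponential bound $\overline{F}(z)\leq e^{-zs^{*}}$ valid for all $z>X$, with a strictly larger exponent $s^{*}>s$. The strict inequality $s^{*}>s$ is essential: it supplies the exponential slack that will later let an $e^{-(s^{*}-s)x}$ prefactor crush a linearly growing Stieltjes integral.

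With the bound in hand I would split the convolution integral at $y=x-X$. On the long range $[0,x-X]$ one has $x-y>X$, so Lemma~\ref{lem:lemma1} gives
\[
e^{xs}\int_{0}^{x-X}\overline{F}(x-y)\,dF(y)\leq e^{-(s^{*}-s)x}\int_{0}^{x-X}e^{s^{*}y}\,dF(y),
\]
and a routine integration by parts combined with $e^{s^{*}y}\overline{F}(y)\leq 1$ on $(X,\infty)$ shows that the Stieltjes integral on the right is $O(x)$, which is beaten by the exponential prefactor. On the short range $[x-X,x]$ the trivial estimate $\overline{F}(x-y)\leq 1$ yields $e^{xs}\int_{x-X}^{x}dF(y)\leq e^{xs}\overline{F}(x-X)=e^{sX}\bigl(e^{s(x-X)}\overline{F}(x-X)\bigr)$, and the last factor tends to $0$ by the hypothesis applied at the shifted argument $x-X$.

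The step I expect to be the most delicate is producing the strict inequality $s^{*}>s$ in Lemma~\ref{lem:lemma1}; every downstream estimate leans on that slack. Without it the Stieltjes integral on $[0,x-X]$ is not compensated by the prefactor and the two-piece decomposition collapses, which is why moment-generating function trouble (e.g.\ $\overline{F}(z)\sim e^{-sz}/z$, for which $E[e^{sY}]=\infty$) lurks in the background. So the heart of the matter is the quantitative upgrade given by the lemma, and the convolution estimate above is the routine engineering that converts that upgrade into the stated asymptotic for $\overline{F^{*2}}$.
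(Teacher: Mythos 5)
Your strategy is essentially the paper's own: use Lemma~\ref{lem:lemma1} to turn the hypothesis into a pointwise bound $\overline{F}(z)\leq e^{-zs^{*}}$ with $s^{*}>s$, split the convolution at scale $X$, and let the prefactor $e^{-(s^{*}-s)x}$ absorb the $O(x)$ growth of the remaining Stieltjes integral. Your two-piece split with integration by parts is tidier than the paper's three-piece discrete split, but both arguments stand or fall on the same ingredient, and you have correctly located the weak point: the strict inequality $s^{*}>s$. That inequality is not available. Lemma~\ref{lem:lemma1} as stated only promises \emph{some} positive $s^{*}$, and the step in its proof that manufactures $s^{*}=s+\ln(1/\epsilon)/X$ rests on the chain $\epsilon e^{-xs}\leq\epsilon^{x/X}e^{-xs}$, which is reversed: for $\epsilon<1$ and $x>X$ one has $\epsilon^{x/X}<\epsilon$. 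A direct counterexample to the lemma-with-slack is $\overline{F}(x)=e^{-sx}/(1+x)$: the hypothesis $\overline{F}(x)e^{xs}\to0$ holds, yet $\overline{F}(x)\leq e^{-xs^{*}}$ with $s^{*}>s$ would force $e^{(s^{*}-s)x}\leq 1+x$ for large $x$, which is impossible. With only $s^{*}=s$ your long-range prefactor degenerates to $1$ and cannot beat the $O(x)$ integral, so the decomposition collapses exactly where you feared it would.

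The gap cannot be patched for the proposition as stated, because the statement is false at the exponent $s$ itself. Take $\overline{F}(x)=e^{-sx}(1+x)^{-1/2}$, so that $\overline{F}(x)e^{xs}=(1+x)^{-1/2}\to0$. Since $dF(y)\geq s e^{-sy}(1+y)^{-1/2}\,dy$, one gets
\[
e^{xs}\int_{0}^{x}\overline{F}(x-y)\,dF(y)\;\geq\; s\int_{0}^{x}\frac{dy}{\sqrt{(1+y)(1+x-y)}}\;\longrightarrow\; s\int_{0}^{1}\frac{dt}{\sqrt{t(1-t)}}=s\pi,
\]
so $\overline{F^{*2}}(x)e^{xs}$ tends to a positive constant rather than to $0$. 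This is precisely the moment-generating-function obstruction you flag in your last paragraph: here $E[e^{sY}]=\infty$. What is true, and what the convolution invariance of $C$ actually requires, is the conclusion for every $t<s$: taking $\epsilon=1$ in the definition of convergence gives $\overline{F}(z)\leq e^{-zs}$ for $z>X$, and running your two-piece argument with exponent $t$ and $s^{*}=s$ restores a genuine slack $s-t>0$, yielding $\overline{F^{*2}}(x)e^{xt}\to0$ for all $t<s$ and hence $C(F^{*2})\geq C(F)$. I recommend reformulating the proposition in that weakened form; your decomposition and integration by parts then go through verbatim.
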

\begin{proof}
First we consider discrete case. We use the same $s^{*}$ and $X$
in lemma \ref{lem:lemma1} and let $x>2X$. Then we calculate (\ref{eq:tailconvolution})
and obtain
\begin{alignat*}{1}
\overline{F^{*2}}(x) & =\overline{F}(x)+\sum_{y=0}^{x-1}\overline{F}(x-y)\left[\overline{F}(y)-\overline{F}(y+1)\right]\\
 & =\overline{F}(x)+\sum_{y=0}^{x-1}\overline{F}(x-y)\overline{F}(y)\left[1-\frac{\overline{F}(y+1)}{\overline{F(y)}}\right]\\
 & \leq\overline{F}(x)+\sum_{y=0}^{X}\overline{F}(x-y)\overline{F}(y)+\sum_{y=X+1}^{x-X}\overline{F}(x-y)\overline{F}(y)\\
 & +\sum_{y=x-X}^{x-1}\overline{F}(x-y)\overline{F}(y)\\
 & \leq\overline{F}(x)+2\sum_{y=0}^{X}e^{-s^{*}\left(x-y\right)}+\left(x-2X\right)e^{-s^{*}x}\\
 & =\overline{F}(x)+e^{-s^{*}x}\left(\frac{2\left(e^{s^{*}X}-1\right)-e^{-s^{*}x}}{1-e^{-s^{*}}}+x-2X\right)
\end{alignat*}
Finally
\[
0\leq\overline{F^{*2}}(x)e^{xs}\leq\overline{F}(x)e^{xs}+\frac{1}{e^{\left(s^{*}-s\right)x}}\left(\frac{2\left(e^{s^{*}X}-1\right)-e^{-s^{*}x}}{1-e^{-s^{*}}}+x-2X\right)
\]
Due to L'Hôpital's rule the last term converges to $0$ as $x\rightarrow+\infty$.
Therefore $\overline{F^{*2}}(x)e^{sx}$ converges to $0$. 

Second we consider continuous case and let $\left\lfloor x\right\rfloor $
denote the integer part of $x,$ then

\begin{alignat*}{1}
\overline{F^{*2}}(x) & =\overline{F}(x)+\int_{0}^{x}\overline{F}(x-y)dF(y)\\
 & \leq\overline{F}(x)+\sum_{y=0}^{\left\lfloor x\right\rfloor -1}\overline{F}(x-y)\left[\overline{F}(y)-\overline{F}(y+1)\right]\\
 & \leq\overline{F}(x)+e^{-s^{*}\left\lfloor x\right\rfloor }\left(\frac{2\left(e^{s^{*}X}-1\right)-e^{-s^{*}\left\lfloor x\right\rfloor }}{1-e^{-s^{*}}}+\left\lfloor x\right\rfloor -2X\right)
\end{alignat*}
Analog, $\overline{F^{*2}}(x)e^{xs}$ converges to $0$. So far both
discrete case and continuous case are considered. This proposition
is proved. 
\end{proof}
In the following we prove a more general proposition.
\begin{prop}
\label{pro2}If $\overline{F}(x)e^{xs}$ converges to $0$ for some
$s>0$, then $\overline{F^{*k}}(x)e^{xs}$ converges to $0$ for any
natural number $k$.\end{prop}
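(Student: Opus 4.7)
The natural plan is induction on $k$. The base case $k=1$ is the hypothesis itself (and the case $k=2$ has just been settled in Proposition \ref{pro1}), so the step to verify is: assuming $\overline{F^{*k}}(x)e^{xs}\to 0$, show $\overline{F^{*(k+1)}}(x)e^{xs}\to 0$. The structural ingredient is the tail identity
\[
\overline{F^{*(k+1)}}(x)=\overline{F}(x)+\int_{0}^{x}\overline{F^{*k}}(x-y)\,dF(y),
\]
which follows from $F^{*(k+1)}=F^{*k}*F$ in exactly the same way as (\ref{eq:tailconvolution}). This reduces the problem to controlling the convolution of $\overline{F^{*k}}$ against $F$, in complete analogy with the $\overline{F^{*2}}$ case.

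The next step is to produce simultaneous exponential envelopes for $\overline{F}$ and $\overline{F^{*k}}$. Applying Lemma \ref{lem:lemma1} to $F$ yields $s^{*}>s$ and $X>0$ with $\overline{F}(x)\le e^{-xs^{*}}$ for $x>X$; applying the same lemma to $F^{*k}$, which is legitimate by the induction hypothesis, yields $s^{*}_{k}>s$ and $X_{k}>0$ with $\overline{F^{*k}}(x)\le e^{-xs^{*}_{k}}$ for $x>X_{k}$. Setting $s':=\min(s^{*},s^{*}_{k})$ and $X':=\max(X,X_{k})$ one obtains a single threshold above which both tails are dominated by $e^{-xs'}$, with the crucial strict inequality $s'>s$ preserved.

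From here one mimics the three-region decomposition used to prove Proposition \ref{pro1}, now applied to the convolution of $\overline{F^{*k}}$ against $dF$ on $[0,x]$. On $[0,X']$ the factor $\overline{F^{*k}}(x-y)$ carries the decay; on the middle interval $[X',x-X']$ both factors decay exponentially, giving a bound of order $x\,e^{-xs'}$; on $[x-X',x]$ the factor $\overline{F}(y)-\overline{F}(y+1)$ (in the discrete case) or $\overline{F}(y)$ (in the continuous case) is small. Collecting the three estimates yields an inequality of the form
\[
\overline{F^{*(k+1)}}(x)\le\overline{F}(x)+e^{-xs'}\,Q(x),
\]
where $Q(x)$ grows at most polynomially. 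Multiplying by $e^{xs}$ and using $s'>s$, the second term tends to $0$ by L'H\^opital while the first tends to $0$ by hypothesis; the continuous case is handled by the same flooring device $\lfloor x\rfloor$ used in the proof of Proposition \ref{pro1}.

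The main obstacle is purely book-keeping: one has to ensure at each stage of the induction that Lemma \ref{lem:lemma1} can legitimately be invoked on $F^{*k}$ and that the exponents $s^{*},s^{*}_{k}$ are chosen strictly larger than $s$, so that the prefactor $e^{xs}$ is genuinely beaten by the exponential envelopes inside the three-region split. No new analytic idea beyond Proposition \ref{pro1} is needed; the induction hypothesis, stated in the same form as the target conclusion, feeds the lemma at every step and keeps the argument self-propagating.
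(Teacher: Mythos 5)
Your proposal is correct and follows essentially the same route as the paper: induction on $k$, the tail identity $\overline{F^{*(k+1)}}=\overline{F}+\int_{0}^{x}\overline{F^{*k}}(x-y)\,dF(y)$, Lemma \ref{lem:lemma1} applied to both $F$ and (via the induction hypothesis) $F^{*k}$ to get a common envelope $e^{-xs'}$ with $s'>s$, and the same three-region split as in Proposition \ref{pro1}. If anything, your write-up is the cleaner one, since it states explicitly that the second application of the lemma is to $F^{*k}$ and that the exponents produced are strictly larger than $s$ --- two points the paper's version leaves garbled.
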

\begin{proof}
We prove it by mathematical induction. Assume $ $$\overline{F^{*k}}(x)e^{xs}$
converges to $0$, what we need to show, due to (\ref{eq:tailconvolution2}),
is
\[
\lim_{x\rightarrow+\infty}e^{xs}\left[\sum_{y=0}^{x-1}\overline{F^{*k}}\left(x-y\right)\left(\overline{F}\left(y\right)-\overline{F}\left(y+1\right)\right)\right]=0
\]
 We know from lemma \ref{lem:lemma1}, there exist positive number
$s_{1}$ and $X_{1}$ such that for all

\[
\overline{F}\left(x\right)<e^{xs_{1}}
\]
as long as $x>X_{1}$. 

Analog there exist positive number $s_{2}$ and $X_{2}$ such that
for all

\[
\overline{F}\left(x\right)<e^{xs_{2}}
\]
as long as $x>X_{2}$. 

Let $X=\max\left\{ X_{1},X_{2}\right\} $ and $s^{*}=\min\left\{ s_{1},s_{2}\right\} $,
then
\begin{eqnarray*}
0 & \leq & e^{xs}\left[\sum_{y=0}^{x-1}\overline{F^{*k}}\left(x-y\right)\left(\overline{F}\left(y\right)-\overline{F}\left(y+1\right)\right)\right]\\
 & \leq & \frac{x-2X+2\left(\frac{e^{X}-1}{1-e^{s^{*}}}\right)}{e^{\left(s^{*}-s\right)x}}
\end{eqnarray*}
Again the last term converges to $0$ as $x\rightarrow+\infty$, hence
$\overline{F^{*\left(k+1\right)}}\left(x\right)e^{xs}$ converges
to $0$. The proposition is proved.
\end{proof}
Second we will prove a somehow surprising result that for a convoluted
distribution the term $\overline{F^{*k}}(x)e^{xs}$ can never converge
to a positive constant, which can be formulated in this proposition.
\begin{prop}
\label{pro3}If $\overline{F}(x)e^{xs}$ tends to a positive constant
$c$ for some $s$, then $\overline{F^{*2}}(x)e^{xs}$ tends to $+\infty$
for all $k\geq2$.\end{prop}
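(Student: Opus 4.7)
The plan is to apply the tail convolution identity~\eqref{eq:tailconvolution} and reduce the proposition to showing that the Laplace--Stieltjes moment $\int_{0}^{T}e^{sy}\,dF(y)$ diverges. Setting $g(t):=\overline{F}(t)e^{st}$, the hypothesis reads $g(t)\to c>0$, and multiplying~\eqref{eq:tailconvolution} by $e^{sx}$ gives
\[
e^{sx}\overline{F^{*2}}(x)=g(x)+\int_{0}^{x}g(x-y)\,e^{sy}\,dF(y).
\]
The first summand stays bounded, so the entire argument concentrates on producing a divergent lower bound for the integral.

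First I would compute $M(T):=\int_{0}^{T}e^{sy}\,dF(y)$ via Stieltjes integration by parts, using $dF=-d\overline{F}$, which yields
\[
M(T)=\overline{F}(0)-g(T)+s\int_{0}^{T}g(y)\,dy.
\]
Since $g(y)\to c>0$, the integral grows at least linearly in $T$, so $M(T)\to+\infty$ at rate $\sim csT$. This is the key structural observation: the tail is of order \emph{exactly} $e^{-sx}$ rather than strictly smaller, and this forces the moment generating function of $F$ to diverge at the point~$s$.

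Next I would pick $A$ large enough that $g(t)\geq c/2$ for all $t\geq A$, and for $x>2A$ restrict the convolution integral to $y\in[0,x-A]$, where $x-y\geq A$. The uniform lower bound on $g$ then gives
\[
\int_{0}^{x}g(x-y)\,e^{sy}\,dF(y)\geq\frac{c}{2}\,M(x-A)\longrightarrow+\infty,
\]
and combining this with the bounded first summand yields $e^{sx}\overline{F^{*2}}(x)\to+\infty$. The extension to general $k\geq2$, which seems to be what the proposition intends in its closing phrase, then follows by induction through~\eqref{eq:tailconvolution2}: once $\overline{F^{*k}}(x)e^{sx}\to+\infty$, the same splitting argument with $\overline{F^{*k}}$ in place of $\overline{F}$ in the integrand propagates the divergence to $k+1$.

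The main obstacle will be the convolution lower bound itself: one cannot naively substitute $g(x-y)\approx c$ uniformly over $[0,x]$, because the convergence $g(t)\to c$ holds only for $t$ large, whereas near $y=x$ the factor $g(x-y)$ need not be close to $c$. Splitting at $y=x-A$ and discarding the short tail past $A$ is the cleanest way around this, and the integration-by-parts formula for $M(T)$ must retain the full positive constant $c$ so that the linear growth $\sim csT$ genuinely survives into the final estimate.
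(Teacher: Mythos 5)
Your proof is correct, and it reaches the conclusion by a genuinely cleaner route than the paper's. Both arguments rest on the same core observation --- because $\overline{F}(x)$ is of order exactly $e^{-sx}$, the convolution term in (\ref{eq:tailconvolution}) contributes a factor growing linearly in $x$ --- but you extract that linear growth differently. The paper works only in the discrete setting, writes the convolution as $\sum_{y}\overline{F}(x-y)\left[F(y+1)-F(y)\right]$, and bounds each summand with $y>X$ below by the positive constant $m\left[(1-e^{-s})C-\epsilon(1+e^{-s})\right]/(C+\epsilon)$; this needs the two-sided bound $C-\epsilon<g(y)<C+\epsilon$ on the tail \emph{and} a global lower bound $m$ on $g$, and it implicitly requires $\epsilon<(1-e^{-s})C/(1+e^{-s})$ so that each increment $g(y)-e^{-s}g(y+1)$ stays positive. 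You instead isolate the truncated exponential moment $M(T)=\int_{0}^{T}e^{sy}\,dF(y)$, prove $M(T)=\overline{F}(0)-g(T)+s\int_{0}^{T}g(y)\,dy\sim csT$ by Stieltjes integration by parts, and then need only the one-sided bound $g\geq c/2$ on $[A,\infty)$ together with nonnegativity of the integrand to discard the range $y\in(x-A,x]$. This buys you three things: the discrete and continuous cases are handled in one stroke; you never need a pointwise lower bound on the increments of $F$ (which may vanish for individual $y$); and the structural reason for the blow-up --- divergence of the moment generating function of $F$ at the point $s$ --- is made explicit. Your induction to general $k\geq2$ is sound, though it can be replaced by the one-line remark that $\overline{F^{*k}}\geq\overline{F^{*2}}$ for sums of nonnegative variables. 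The only thing to state explicitly is that $s>0$, which the hypothesis forces anyway since $\overline{F}(x)\rightarrow0$.
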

\begin{proof}
Let $\overline{F}(x)e^{xs}$ converges to a positive constant $C$
. We see two inequalities. For $\forall\epsilon>0,\exists X>0,\forall x>X$
the inequality
\[
C-\epsilon<e^{sx}\overline{F}(x)<C+\epsilon
\]
the inequality 
\[
\overline{F}(x)e^{xs}\geq\min\left\{ \overline{F}(0),\overline{F}(1)e^{1s},...,\overline{F}(X)e^{Xs},C-\epsilon\right\} 
\]
Let 
\[
m:=\min\left\{ \overline{F}(0),\overline{F}(1)e^{1s},...,\overline{F}(X)e^{Xs},C-\epsilon\right\} 
\]

What we need to prove is that
\[
\lim_{x\rightarrow\infty}\frac{\sum_{y=0}^{x-1}\overline{F}(x-y)\left[F(y)-F(y+1)\right]}{\overline{F}(x)}=+\infty
\]
 Then we estimate
\begin{eqnarray*}
\frac{\sum_{y=0}^{x-1}\overline{F}(x-y)\left[F(y)-F(y+1)\right]}{\overline{F}(x)} & \geq & \frac{\sum_{y=X+1}^{x-1}\overline{F}(x-y)\left[F(y)-F(y+1)\right]}{\overline{F}(x)}\\
 & \geq & \frac{\sum_{y=0}^{x-1}\overline{F}(x-y)e^{s\left(x-y\right)}\left[F(y)d^{sy}-\frac{F(y+1)e^{s\left(y+1\right)}}{e^{s}}\right]}{\overline{F}(x)e^{xs}}\\
 & \geq & \sum_{y=X+1}^{x-1}\frac{m\left[\left(1-\frac{1}{e^{s}}\right)C-\epsilon\left(1+\frac{1}{e^{s}}\right)\right]}{C+\epsilon}
\end{eqnarray*}
The last term tends to infinity as $x\rightarrow+\infty$. The proposition
is proved.
\end{proof}
Combining propositions (\ref{pro1}) (\ref{pro2}) and (\ref{pro3}),
we can obtain the following theorem
\begin{thm}
Parameter $C(F)$ defined through
\[
C\left(F\right):=\sup\left\{ t\in\mathbb{R}_{\geq0}:\lim_{x\rightarrow+\infty}\overline{F}(x)e^{xt}=0\right\} 
\]
is invariant under convolution, i.e. 
\[
C\left(F\right)=C\left(F^{*k}\right)
\]
for all natural number $k$.\end{thm}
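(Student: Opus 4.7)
The plan is to establish equality of the two sublevel sets
\[
A_F := \{t \geq 0 : \lim_{x\to+\infty}\overline{F}(x)e^{xt} = 0\}
\quad\text{and}\quad
A_{F^{*k}} := \{t \geq 0 : \lim_{x\to+\infty}\overline{F^{*k}}(x)e^{xt} = 0\},
\]
since $C(F)=\sup A_F$ and $C(F^{*k})=\sup A_{F^{*k}}$ by definition, so $A_F=A_{F^{*k}}$ will immediately yield $C(F)=C(F^{*k})$.

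First I would show $A_F\subseteq A_{F^{*k}}$. This inclusion is exactly the content of Proposition~\ref{pro2}: for any $t\in A_F$ we have $\overline{F}(x)e^{xt}\to 0$, so the proposition (applied with $s=t$) gives $\overline{F^{*k}}(x)e^{xt}\to 0$, i.e.\ $t\in A_{F^{*k}}$. This inclusion already yields the non-trivial inequality $C(F)\leq C(F^{*k})$.

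Second I would show the reverse inclusion $A_{F^{*k}}\subseteq A_F$ by a straightforward comparison. Since the components $\xi_i$ in the model (and hence any random variable with distribution $F$ in our setting) are nonnegative, taking i.i.d.\ copies $Y_1,\dots,Y_k$ of distribution $F$ gives $Y_1+\cdots+Y_k\geq Y_1$, so $\overline{F^{*k}}(x)\geq \overline{F}(x)$ for every $x\geq 0$. Multiplying by $e^{xt}$ and sandwiching, $\overline{F^{*k}}(x)e^{xt}\to 0$ forces $\overline{F}(x)e^{xt}\to 0$. Hence $A_{F^{*k}}\subseteq A_F$, giving $C(F^{*k})\leq C(F)$, and combining the two inclusions finishes the proof.

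The main substantive step is Proposition~\ref{pro2}, whose induction on $k$ rests on the tail convolution identity \eqref{eq:tailconvolution2} and the uniform exponential decay bound from Lemma~\ref{lem:lemma1}; the reverse inclusion is then a one-line monotonicity observation and poses no obstacle. Proposition~\ref{pro3} is not logically required for the equality of the suprema, but it complements the statement: together with Proposition~\ref{pro2} it produces a dichotomy at the critical exponent $t=C(F)$, namely that for $k\geq 2$ the sequence $\overline{F^{*k}}(x)e^{xt}$ either tends to $0$ or blows up to $+\infty$, ruling out convergence to a positive constant and so providing the promised interpretation of $C(F)$ as the clean boundary between exponential decay and exponential blow-up of the tail.
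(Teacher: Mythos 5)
Your proposal is correct, and its second half takes a genuinely different (and in fact more complete) route than the paper. For the inequality $C(F)\leq C(F^{*k})$ you and the paper do the same thing: invoke Proposition~\ref{pro2} (whose induction rests on the identity (\ref{eq:tailconvolution2}) and the uniform bound of Lemma~\ref{lem:lemma1}) to get the inclusion $A_F\subseteq A_{F^{*k}}$. For the reverse inequality the paper offers no explicit argument; it only says the theorem follows by ``combining'' Propositions~\ref{pro1}, \ref{pro2} and \ref{pro3}, and the only candidate for the reverse direction is Proposition~\ref{pro3}, which handles solely the case where $\overline{F}(x)e^{xs}$ converges to a \emph{positive constant}. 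That does not cover all $t\notin A_F$ (the product could oscillate, or diverge to $+\infty$ without ever stabilising), so the paper's cited propositions do not by themselves establish $C(F^{*k})\leq C(F)$. Your one-line monotonicity observation $\overline{F^{*k}}(x)\geq\overline{F}(x)$ --- valid here because the summands are nonnegative, and in fact immediate from the paper's own identity (\ref{eq:tailconvolution}) since the integral term there is nonnegative --- closes that gap cleanly and gives $A_{F^{*k}}\subseteq A_F$. What the paper's route via Proposition~\ref{pro3} buys instead is the extra qualitative information (the dichotomy at the critical exponent, i.e.\ that $\overline{F^{*k}}(x)e^{xs}$ cannot tend to a positive constant for $k\geq2$), which, as you correctly note, is a complement to the theorem rather than a logical ingredient of it. The only cosmetic caveat in your write-up is that Proposition~\ref{pro2} is stated for $s>0$ while $A_F$ also contains $t=0$; that point is trivial since $\overline{F^{*k}}(x)\to0$ for any proper distribution, but it is worth a half-sentence.
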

\begin{example*}
We consider exponential distribution
\[
F(x)=1-e^{-\lambda x}
\]
and its convolution
\[
F^{*2}(x)=1-e^{-\lambda x}\left(1+\lambda x\right)
\]
Obviously
\[
C\left(F\right)=C\left(F^{*2}\right)=\lambda
\]
\end{example*}
\begin{thm}
The parameter of $X_{n}$ in (\ref{eq:insider_random_sum})depends
only on $\xi$, what's more, for all $n>0$
\[
C\left(F_{n}\right)=C\left(F_{\xi}\right)
\]
\end{thm}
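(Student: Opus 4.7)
The plan is to proceed by induction on $n$. For the base case $n=1$, since $X_0$ is a deterministic positive integer, $X_1=\sum_{i=0}^{X_0}\xi_i^{(1)}$ is exactly a sum of $X_0+1$ i.i.d.\ copies of $\xi$; that is, $F_1=F_\xi^{*(X_0+1)}$, and the convolution invariance theorem established just above gives $C(F_1)=C(F_\xi)$. For the inductive step, assuming $C(F_n)=C(F_\xi)$, I would work from the mixture formula (property~3 of the proposition in Section~\ref{sec:Basic-properties-of}), written in tail form as
\[
\overline{F_{n+1}}(y)\,e^{ys}\;=\;\sum_{k=1}^{\infty}P[X_n=k]\,\overline{F_\xi^{*k}}(y)\,e^{ys},
\]
and prove the two inequalities $C(F_{n+1})\leq C(F_\xi)$ and $C(F_{n+1})\geq C(F_\xi)$ separately.

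The easy inequality is $C(F_{n+1})\leq C(F_\xi)$. Since $\sum_k P[X_n=k]=1$, there is some $k_0$ with $P[X_n=k_0]>0$; keeping only that term in the mixture gives the pointwise lower bound $\overline{F_{n+1}}(y)\,e^{ys}\geq P[X_n=k_0]\,\overline{F_\xi^{*k_0}}(y)\,e^{ys}$. For any $s>C(F_\xi)$, the convolution invariance theorem yields $C(F_\xi^{*k_0})=C(F_\xi)$, so the right-hand side does not tend to $0$, forcing $\overline{F_{n+1}}(y)\,e^{ys}\not\to 0$ and hence $C(F_{n+1})\leq C(F_\xi)$.

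The delicate inequality is $C(F_{n+1})\geq C(F_\xi)$: fix $s<C(F_\xi)$ and show $\overline{F_{n+1}}(y)\,e^{ys}\to 0$. Each summand in the mixture tends to $0$ by Proposition~\ref{pro2}, so the natural route is dominated convergence. A candidate dominator is produced by combining Lemma~\ref{lem:lemma1} (which gives $\overline{F_\xi}(y)\leq e^{-ty}$ eventually for any $t$ with $s<t<C(F_\xi)$, whence $M(t):=E[e^{t\xi}]<\infty$) with Markov's inequality $\overline{F_\xi^{*k}}(y)\leq e^{-yt}M(t)^k$, leading to the uniform estimate
\[
\overline{F_{n+1}}(y)\,e^{ys}\;\leq\;e^{y(s-t)}\,E\!\left[M(t)^{X_n}\right].
\]
The whole argument then reduces to proving $E[M(t)^{X_n}]<\infty$ for some admissible $t$, and this is the step I expect to be the main obstacle: the induction hypothesis $C(F_n)=C(F_\xi)$ only delivers $E[e^{rX_n}]<\infty$ for $r<C(F_\xi)$, whereas $\log M(t)$ may well exceed $C(F_\xi)$ as $t\to C(F_\xi)^-$. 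I would attempt to close this gap by choosing $t$ very close to $s$ (so that $\log M(t)$ stays small) and then letting $s\uparrow C(F_\xi)$ along a sequence, but the viability of that plan is exactly what needs the most careful scrutiny.
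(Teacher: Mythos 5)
Your base case and the upper bound $C(F_{n+1})\leq C(F_\xi)$ are correct, and you have put your finger on exactly the right spot: the domination step $E[M(t)^{X_n}]<\infty$ is not merely "the main obstacle" but a gap that cannot be closed, because the reverse inequality $C(F_{n+1})\geq C(F_\xi)$ is false in general. Using the equivalence of $C(F)$ with the abscissa of convergence of the moment generating function (one direction is Markov's inequality plus dominated convergence, the other is Lemma \ref{lem:lemma1}), one gets $E[e^{tX_{n+1}}]=M_\xi(t)\,E[M_\xi(t)^{X_n}]$, which is finite precisely when $\log M_\xi(t)<C(F_n)$; hence $C(F_{n+1})=\sup\{t:\log M_\xi(t)<C(F_n)\}$. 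Since $\log M_\xi(t)\geq tE[\xi]$ by Jensen, this supremum falls strictly below $C(F_n)$ whenever $E[\xi]>1$ and $C(F_\xi)<\infty$. Concretely, take $\xi$ geometric on $\{1,2,\dots\}$ with $P[\xi=j]=(1-p)p^{j-1}$, so $C(F_\xi)=\ln(1/p)$ and $M_\xi(t)=(1-p)e^t/(1-pe^t)$. Then $X_1$ is negative binomial with $C(F_1)=\ln(1/p)$, but $E[M_\xi(t)^{X_1}]<\infty$ only for $M_\xi(t)<1/p$, which forces $t<\ln(1/p)-\ln(2-p)$ and gives
\[
C(F_2)=\ln\frac{1}{p}-\ln(2-p)\;<\;C(F_\xi).
\]
Your proposed repair (take $t$ just above $s$ and let $s\uparrow C(F_\xi)$) fails for the same reason: once $s>C(F_\xi)/E[\xi]$, every admissible $t>s$ already satisfies $\log M_\xi(t)\geq tE[\xi]>C(F_\xi)$, so the dominating series diverges for every choice of $t$.

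For comparison, the paper's own proof consists of the single assertion that termwise convergence of $\overline{F_\xi^{*k}}(x)e^{xs}$ to $0$ implies convergence of the infinite mixture $\sum_k P[X_{n-1}=k]\,\overline{F_\xi^{*k}}(x)e^{xs}$ to $0$; that is exactly the interchange of limit and infinite sum that your dominated convergence argument was trying to legitimize, and the example above shows it cannot be legitimized without extra hypotheses. The statement survives only under additional assumptions, e.g. $C(F_\xi)=+\infty$ (bounded $\xi$, or $\xi\in\{0,1\}$ as in the cited network model), or a condition ensuring $\sup_{t<C(F_\xi)}\log M_\xi(t)\leq C(F_\xi)$. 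In the generality claimed, the theorem and the paper's proof are both incorrect, and your analysis, by isolating the missing domination, is the more reliable of the two.
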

\begin{proof}
From \citet{aconvolutioninvariantparameter} we have 
\[
\overline{F_{n}}(x)=\sum_{k=1}^{+\infty}P\left[X_{n-1}=k\right]\overline{F_{\xi}^{*k}}(x)
\]
then 
\[
\overline{F_{n}}(x)e^{xs}=\sum_{k=1}^{+\infty}P\left[X_{n-1}=k\right]\overline{F_{\xi}^{*k}}(x)e^{xs}
\]
If $\overline{F_{\xi}^{*k}}(x)e^{xs}$ converges to $0$ for some
$s>0$, $\overline{F_{n}}(x)e^{xs}$ converges 0. Hence this parameter
$C$ depends only on $\xi$ while is independent of $n$.
\end{proof}

\section{Asymptotic properties}

In this section we study the asymptotic properties of $\left\{ X_{n}\right\} $
and denote the limiting probability function by $f^{\star}$. 

We recall

\begin{equation}
F_{n+1}\left(y\right)=\sum_{k=1}^{+\infty}P\left[X_{n}=k\right]F_{\xi}^{*k}\left(y\right)\label{eq:convolution1}
\end{equation}
and further
\begin{equation}
F_{n+1}\left(y-1\right)=\sum_{k=1}^{+\infty}P\left[X_{n}=k\right]F_{\xi}^{*k}\left(y-1\right)\label{eq:convolution2}
\end{equation}
Letting (\ref{eq:convolution1})-(\ref{eq:convolution2}), then

\[
f_{n+1}\left(y\right)=\sum_{k=1}^{+\infty}f_{n}\left(k\right)f_{\xi}^{*k}\left(y\right)
\]
where $f_{n}$ denotes the probability function of $X_{n}$ and $f_{\xi}$
the probability function of $\xi.$

We consider $\left(f_{\xi}^{*k}\left(y\right)\right)_{y,k}$ as a
Markov operator, which maps a probability function to another, and
denote it by $M_{\xi}$, then the above expression can be rewritten
in form of
\[
f_{n+1}\left(y\right)=\left(M_{\xi}f_{n}\right)\left(y\right)
\]
and furthermore 
\[
f_{n+1}\left(y\right)=\left(M_{\xi}^{n+1}f_{0}\right)\left(y\right)
\]

So for the asymptotic property of $\left\{ X_{n}\right\} _{n}$, we
have the following proposition.
\begin{prop}
$f_{n}$ converges to a function $f^{\star}$ satisfying 
\begin{equation}
\sum_{k\neq j}f^{\star}\left(k\right)f_{\xi}^{*k}\left(j\right)+f^{\star}\left(j\right)\left(1-f_{\xi}^{*j}\left(j\right)\right)=0\label{propositionconvergences}
\end{equation}
\end{prop}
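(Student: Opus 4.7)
The recursion $f_{n+1}(y) = (M_\xi f_n)(y) = \sum_{k \geq 1} f_n(k)\, f_\xi^{*k}(y)$ displayed immediately above the statement is the structural fact to exploit. The plan is to (a) show that $f_n(y) \to f^\star(y)$ pointwise for some $f^\star$; (b) pass the limit through the infinite sum to obtain $f^\star(y) = \sum_{k \geq 1} f^\star(k)\, f_\xi^{*k}(y)$; and (c) specialize to $y = j$, split off the $k = j$ term, and rearrange to recover (\ref{propositionconvergences}).

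For step (a), since $0 \leq f_n(k) \leq 1$ uniformly, a diagonal extraction (Helly selection on the distribution functions $F_n$) produces a subsequential pointwise limit $f^\star$. To rule out mass escaping to infinity and to upgrade subsequential to full convergence, I would invoke the convolution-invariance proved in Section \ref{sec:A-convolution-invariant}: because $C(F_n) = C(F_\xi)$ for every $n$, Lemma \ref{lem:lemma1} furnishes a tail bound $\overline{F_n}(x) \leq e^{-s^* x}$ valid uniformly in $n$ (for any $s^*$ strictly below $C(F_\xi)$ and $x$ past a threshold independent of $n$). This yields tightness of $\{F_n\}$; provided the fixed-point equation in (b) admits a unique solution within this tight class, every subsequential limit must coincide, forcing convergence of the full sequence.

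For step (b), one truncates the sum at $k = K$ and writes
\[
f_{n+1}(y) = \sum_{k=1}^{K} f_n(k)\, f_\xi^{*k}(y) + \sum_{k > K} f_n(k)\, f_\xi^{*k}(y).
\]
The finite head converges as $n \to \infty$ to $\sum_{k=1}^{K} f^\star(k)\, f_\xi^{*k}(y)$ by pointwise convergence on a finite index set. The tail is dominated by $\bigl(\sum_{k > K} f_n(k)\bigr)\cdot \sup_{k > K} f_\xi^{*k}(y)$; the first factor is controlled uniformly in $n$ via the exponential tail obtained in step (a), and the second tends to zero as $K \to \infty$ since a sum of $k$ nonnegative integer random variables can hit the fixed small value $y$ only if at least $k - y$ summands vanish. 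Sending first $n \to \infty$ and then $K \to \infty$ yields the fixed-point equation, after which step (c) is a one-line algebraic rearrangement.

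The genuinely delicate point is step (a). Without any visible contraction or monotonicity for $M_\xi$, ruling out oscillation among distinct fixed points, or drift to the trivial limit $f^\star \equiv 0$ (which itself satisfies (\ref{propositionconvergences}) and would correspond to a supercritical regime $E[\xi] > 1$ in the Galton--Watson analogy), is where the tail-invariance machinery of Section \ref{sec:A-convolution-invariant} must carry the real weight. Establishing uniqueness of the fixed point of $M_\xi$ within the class of subprobability masses whose tail exponent is at least $C(F_\xi)$ is the auxiliary lemma I would need to insert in order to close the argument.
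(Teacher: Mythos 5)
Your steps (b) and (c) are sound: given pointwise convergence $f_{n}\to f^{\star}$, the truncation argument does pass the limit through the infinite sum (for fixed $y$ the quantity $\sup_{k>K}f_{\xi}^{*k}(y)$ indeed vanishes as $K\to\infty$, since $k$ nonnegative integer summands can total $y$ only if at least $k-y$ of them are zero), and isolating the $k=j$ term yields (\ref{propositionconvergences}). The genuine gap is exactly where you located it, in step (a), and the repair you propose does not close it. The invariance $C(F_{n})=C(F_{\xi})$ is a statement about each fixed $n$ separately; the threshold $X$ and the exponent $s^{*}$ produced by Lemma \ref{lem:lemma1} depend on the particular distribution, and nothing in Section \ref{sec:A-convolution-invariant} makes them uniform in $n$. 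Indeed they cannot be made uniform in general: when $E[\xi]>1$ one has $E[X_{n}]=X_{0}E[\xi]^{n}\to\infty$, so the family $\{F_{n}\}$ is not tight even though every $F_{n}$ carries the same parameter $C$. Thus the claimed uniform tail bound fails, tightness fails in the supercritical regime, and even where tightness holds you have deferred both the uniqueness of the fixed point and the upgrade from subsequential to full convergence to an unproven auxiliary lemma. As written, your argument shows only that every subsequential (vague) limit of $f_{n}$ satisfies (\ref{propositionconvergences}); it does not show that $f_{n}$ converges.

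For comparison, the paper takes an entirely different route: it treats $M_{\xi}$ as an operator, posits a spectral decomposition $M_{\xi}^{n}=\sum_{i}\lambda_{i}^{n}E_{\lambda_{i}}$, bounds the spectral radius by $1$ via Gelfand's formula, and lets $n\to\infty$ so that only the eigenvalue $1$ survives. If valid, that would deliver convergence of the full sequence directly, which is precisely what your compactness route cannot produce; but it rests on the existence of such a decomposition for an infinite stochastic matrix and on the absence of peripheral spectrum other than $1$, neither of which is justified in the paper. Neither argument actually establishes the convergence claim; yours has the merit of making explicit where the missing lemma sits.
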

\begin{proof}
Operator $M_{\xi}$ is of the form 
\[
M_{\xi}=\sum_{i=1}\lambda_{i}E_{\lambda_{i}}
\]
and further
\begin{equation}
M_{\xi}^{n}=\sum_{\lambda_{i}\in\sigma\left(M_{\xi}\right)}\lambda_{i}^{n}E_{\lambda_{i}}\label{eq:convergence}
\end{equation}
where $\lambda_{i}$ denote the eigenvalues of $M_{\xi}$ and $E_{i}$
denote the projections projecting the entire space into eigenspaces
corresponding to $\lambda_{i}$.

We apply Gelfand theorem \citep{linearoperators} to here
\[
\sigma\left(M_{\xi}\right)=\lim_{n\rightarrow\infty}\left\Vert M_{\xi}^{n}\right\Vert ^{\frac{1}{n}}=1
\]
In other words, $\left|\lambda_{i}\right|$ is bound by $1$ for all
eigenvalues.

Let $n$ tend to infinity and we see that only eigenvalue $\lambda_{i}=1$
contributes in the right hand side of (\ref{eq:convergence}), therefore
this limiting probability function $f^{\star}$ is a fix-point of
$M_{\xi}$, i,e. 
\[
M_{\xi}f^{\star}=f^{\star}
\]

After simplification we obtain (\ref{propositionconvergences}) and
the proposition is proved.
\end{proof}

\section{Conclusion}

In this paper we study a class of stochastic processes, which is widely
used in modeling of Network and Finance and obtain its two asymptotic
properties.

Our main aim 

\bibliographystyle{plain}
\nocite{*}
\bibliography{stochasticprocess}

\end{document}